\pgfplotsset{compat=1.14}
\newcommand{\bd}{\mathbf}
\newcommand{\LtR}{\mathbf L^2(\mathbb{R})}
\newcommand{\FLtRp}{\mathcal F^{-1}(\mathbf L^2(\mathbb{R}^+))}
\newcommand{\RR}{\mathbb{R}}
\newcommand{\ZZ}{\mathbb{Z}}
\newcommand{\NN}{\mathbb{N}}
\newcommand{\CC}{\mathbb{C}}
\newcommand{\abs}[1]{|#1|}
\newcommand{\wlet}{\psi}
\newcommand{\Wlet}{\Psi}
\newcommand{\Hil}{\mathcal{H}}
\newcommand{\oscV}{{\mathrm{osc}}_{\mathcal{V}}}
\theoremstyle{plain}
\newtheorem{theorem}{Theorem}[section]
\newtheorem{proposition}[theorem]{Proposition}
\newtheorem{lemma}[theorem]{Lemma}
\newtheorem{corollary}[theorem]{Corollary}
\theoremstyle{definition}
\theoremstyle{remark}
\theoremstyle{remark}
\newtheorem*{rem*}{Remark}
\begin{document}
\author{\IEEEauthorblockN{Nicki Holighaus\IEEEauthorrefmark{1} and
G\"unther Koliander\IEEEauthorrefmark{2}}
\IEEEauthorblockA{Acoustics Research Institute, Austrian Academy of Sciences, Wohllebengasse 12--14, 1040 Vienna, Austria.\\
Email: \IEEEauthorrefmark{1}nicki.holighaus@oeaw.ac.at,
\IEEEauthorrefmark{2}guenther.koliander@oeaw.ac.at}}

 \title{Rotated time-frequency lattices are sets of stable sampling for continuous wavelet systems}
\maketitle
\begin{abstract}%
We provide an example for the generating matrix $A$ of a two-dimensional lattice $\Gamma = A\ZZ^2$, such that the following holds: For any sufficiently smooth and localized mother wavelet $\psi$, there is a constant $\beta(A,\psi)>0$, such that $\beta\Gamma\cap (\RR\times\RR^+)$ is a set of stable sampling for the wavelet system generated by $\psi$, for all $0<\beta\leq \beta(A,\psi)$. The result and choice of the generating matrix are loosely inspired by the studies of low discrepancy sequences and uniform distribution modulo $1$. In particular, we estimate the number of lattice points contained in any axis parallel rectangle of fixed area. This estimate is combined with a recent sampling result for continuous wavelet systems, obtained via the oscillation method of general coorbit theory. 
\end{abstract}

\begin{IEEEkeywords}
wavelet transforms, frames, lattice rules, oscillation method
\end{IEEEkeywords}

\IEEEpeerreviewmaketitle
\section{Introduction}

A wavelet system~\cite{da92} is a collection of functions generated from a single prototype, the \emph{mother wavelet}, by translation and dilation. Here, we consider a complex mother wavelet $\wlet\in\LtR$, such that its Fourier transform vanishes for negative frequencies, i.e., $\hat{\wlet}(\xi) = 0$ for $\xi\in (-\infty,0]$. With a slight abuse of notation, we denote the space of square-integrable functions with this property as  $\FLtRp$. It is a Hilbert space with respect to the standard inner product on $\bd L^2(\RR)$. Mother wavelets $\wlet\in \FLtRp$ are sometimes called \emph{analytic}, although the terminology \emph{analytic wavelet transform} is not used consistently~\cite{ltfatnote053,Lilly2010analytic}. 
The associated continuous wavelet system is 
defined as 
\begin{equation}\label{eq:WletSystem}
   \Wlet = (\wlet_{x,s})_{x\in\RR,s\in\RR^+},\ \text{where}
\end{equation}
\begin{equation}\label{eq:WletAtoms}
   \wlet_{x,s} := \bd T_x \bd D_{1/s} \wlet = \sqrt{s} \wlet\left(s(\bullet-x)\right).
\end{equation}
Here, $\bd D_s$ with $s\in\RR^+$ denotes a unitary dilation by the factor $s$, and $\bd T_x$ denotes translation by $x\in\RR$. We note that the above definition is slightly unusual in the sense that we associate large scales with small values of $s$~\cite{da92}. Under the given conditions on the mother wavelet $\wlet$, the natural extension of $\Wlet$ to negative scales $s<0$ has no further consequences on our results and is therefore omitted. In the following, we refer to $\Lambda = \RR\times\RR^+$ as the \emph{phase space} associated with $\Wlet$.

Sets of stable sampling for the continuous wavelet system $\Wlet$ are discrete sets $\Lambda_D\subset \Lambda$, such that $(\wlet_{x,s})_{(x\, ;\, s)^{\intercal}\in \Lambda_D}$ forms a frame, cf.\ Section \ref{ssec:wavelets}. Such sets have been systematically studied at least since the popularization of frame theory and multiresolution analyses in the 1980s~\cite{dagrme86,mallat1989multiresolution,meyer1993}, although certain wavelet bases were known earlier, as discussed in~\cite[Chapter 4.2.1]{da92}. Most established sampling schemes for wavelets follow the same formula: 
\begin{enumerate}
  \item Select a basis $a\in (1,\infty)$ and consider all scales $a^j$, $j\in\ZZ$.
  \item Select a relative translation step $b\in (0,\infty)$ and consider, at scale $a^j$, the translations $a^{-j}  \cdot lb$, $l\in\ZZ$. 
\end{enumerate}
The density of the resulting set $\Lambda_D =\big\{ 
\binom{a^{-j} \cdot lb}{a^j}
\colon j,l\in\ZZ\big\}\subset \Lambda$ can be controlled by scaling $a$ and $b$. Although variations on this scheme are occasionally studied, e.g., for the construction of shift-invariant wavelet systems~\cite{RON1997408}, they often only differ by adding additional points to $\Lambda_D$.

A few recent works have explored low discrepancy sets and sequences, as commonly used in the quasi-Monte Carlo method~\cite{dick_pillichshammer_2010}, for selecting discrete subsets from $\Psi$ and other time-frequency systems~\cite{levie2023quasi,holighaus2023grid}. In particular, the numerical results in 
the work \cite{holighaus2023grid} suggest that the restriction of certain sheared lattices to the upper half-plane form sets of stable sampling for \emph{inhomogeneous} wavelet systems which are obtained by removing the scales $s<1$ and adding an appropriate substitute. 

\vspace{6pt}
\noindent\textit{Contribution: } In this work, we consider the rotated square lattice  $\Gamma = A\ZZ^2$, with $A = \begin{psmallmatrix} 1 & -\alpha \\ \alpha & 1 \end{psmallmatrix}$, for a specific choice of $\alpha$. We show that isotropic dilations of $\Gamma$, restricted to the upper half-plane, form sets of stable sampling for the continuous wavelet system $\Wlet$. The main step towards this result is achieved by proving that the number of lattice points contained in an arbitrary half-open rectangle is proportional to its area, provided that the rectangle has at least a certain minimal area. Our result 
relies on choosing $\alpha$ to be badly approximable, a property that also plays a prominent role in 
the construction of Kronecker sequences~\cite{kuinie,DT97}. The choice of $\Gamma$ further bears some resemblance to lattice rules~\cite{Hickernell1998}, a popular construction rule for low discrepancy sequences. 

\section{Preliminaries}
\label{ssec:wavelets}

For any $\wlet\in\FLtRp$, the continuous wavelet transform defined for all $(x\, ;\, s)^{\intercal}\in\Lambda$ by 
\begin{equation}\label{eq:waveletttransform}
    \begin{split}
    W_{\wlet} f (x,s) = \langle f, \wlet_{x,s} \rangle_{L_2},\ \ \text{for all } f\in\FLtRp, 
    \end{split}
\end{equation}
is a function in $\bd L^\infty(\Lambda)$. If the so-called admissibility constant $C_\wlet = \|\wlet/(\bullet)\|^2_2$ is finite, then the system $\Wlet\subset \FLtRp$ is a \emph{continuous tight frame}\cite{antoin2,Christensen2016} with frame bound $C_\wlet$. In particular, if $C_\wlet = 1$, then 
$\|f\|_{\bd L^2(\RR)}=\|W_{\wlet} f\|_{\bd L^2(\Lambda)}$, for all $f\in \FLtRp$. The image space $\widetilde{\Hil} = W_{\wlet}(\FLtRp)$ of the wavelet transform with mother wavelet $\wlet$ is a reproducing kernel Hilbert space of continuous, square-integrable functions~\cite{da92,Christensen2016}.

This work is concerned with the study of certain discrete subsets $\Lambda_D\subset \Lambda$, such that 
 $(\wlet_{x,s})_{(x\, ;\, s)^{\intercal}\in\Lambda_D}$ 
is a discrete frame~\cite{Christensen2016}, i.e., 
\begin{equation}\label{eq:frameineq}
  A\|f\|_2^2 \leq \sum_{(x\, ;\, s)^{\intercal}\in\Lambda_D} |\langle f,\wlet_{x,s}\rangle|^2 \leq B\|f\|_2^2,
\end{equation}
for all $f\in\FLtRp$ and some constants $0<A\leq B<\infty$. When we say that $\Lambda_D$ is a set of stable sampling for the continuous wavelet system $\Wlet$, we mean precisely that $\Lambda_D$ is a set of stable sampling for $\widetilde{\Hil}$, which, noting $C_\wlet \|f\|_2^2 = \|W_{\wlet} f\|_{\widetilde{\Hil}}^2$, is equivalent to \eqref{eq:frameineq}.


\section{The Frame Property from Oscillation Estimates}\label{ssec:oscill}

The oscillation method was introduced by Feichtinger and Gr\"ochenig in their seminal works studying atomic decompositions in \emph{coorbit spaces}~\cite{feichtinger1988unified,feichtinger1989banach,feichtinger1989banach2}. Although the discretization results in those works can be applied to the wavelet transform on $\FLtRp$, it is nontrivial to show the density and separation requirements therein for the specific point sets $c\Gamma$ that we consider. Hence, it is more convenient to rely on \emph{generalized coorbit theory}, as introducted in~\cite{fora05} and extended in \cite{rauhut2011generalized,kempka2015general}. Applied to continuous frames on Hilbert spaces $\Hil$, this variant of the oscillation method yields a sufficient condition for irregular sets of stable sampling that can informally be summarized as follows: Assume that $\Phi = (\phi_{m})_{m\in M}$ is a localized~\cite{fora05}, continuous tight frame for $\Hil$, and there is a countable covering $\mathcal{V} = (V_j)_{j\in J}$ of $M$, such that the oscillation of $\Phi$ with respect to $\mathcal{V}$ is small in the norm of some Schur-type algebra $\mathcal A$~\cite{VoHoModules}. Then any set $(m_j)_{j\in J}$, with $m_j\in V_j$ for all $j\in J$, is a set of stable sampling for $\Phi$, or equivalently, $(\phi_{m_j})_{j\in J}$ is a discrete frame for $\Hil$. More precisely, given a function $\Xi\colon \Lambda\times\Lambda \rightarrow \CC$, with $|\Xi|\equiv 1$, the $\Xi$-oscillation $\oscV$ of $\Phi$ with respect to $\mathcal{V}$ is given by 
\begin{equation}
    \begin{split}
    \lefteqn{\oscV(m_0,m_1)}\\
    & = \sup_{n \in \mathcal V_{m_1}}
         \left|
            \langle \phi_{m_0},\phi_{m_1}\rangle - \Xi(m_1,n)\langle \phi_{m_0},\phi_{n}\rangle \right|,
    \end{split}
         \label{eq:def_genosckern}        
    \end{equation}
where $\mathcal V_{m_1} := \bigcup_{j \in J: m_1 \in V_j} V_j$. The appropriate choice of $\Xi$ is crucial to achieve small $\mathcal A$-norm of $\oscV$.

In a recent paper concerned with warped time-frequency systems~\cite{bahowi15}, of which the continuous wavelet systems $\Psi$ are a special case, it was shown that there exists a family of coverings $\mathcal{V^\delta} = (V^{\delta}_j)_{j\in J}$ of $\Lambda = \RR\times\RR^+$, comprised of half-open rectangles of identical area $\delta^2$, such that the oscillation of the wavelet system $\Psi$ with respect to $\mathcal{V}^\delta$ converges to zero in $\mathcal A$-norm, for $\delta\rightarrow 0$. In particular, \cite[Example 4.1, Corollary 6.9]{bahowi15} imply the following result.

\begin{theorem}\label{thm:SmallOscYieldsFrame}
  Let $\wlet\in\FLtRp$, with $C_\wlet=1$, such that $\mathcal F(\psi)  \in \mathcal C^2(\RR)$, with 
  $\max\{(\bullet)^5,(\bullet)^{-5}\}\cdot \mathcal F(\psi)  \in \mathcal C_0(\RR)\cap \bd L^2(\RR^+)$. Define $\mathcal V^\delta = (V^\delta_{k,\ell})_{k,\ell\in\ZZ}$ by
  \[
   V^\delta_{k,\ell} = \left[\frac{\delta^2 k}{|I^\delta_\ell|},\frac{\delta^2 (k+1)}{|I^\delta_\ell|}\right) \times I^\delta_\ell,
  \]
  with $I^\delta_\ell = [e^{\delta \ell},e^{\delta (\ell+1)})$. Then there exists $\delta(\Wlet) > 0$, such that for any $0<\delta \leq \delta(\Wlet)$, and any discrete set $\Lambda_D\subset \Lambda$ with
  \[\inf_{k,\ell\in\ZZ} |\Lambda_D \cap V^\delta_{k,\ell}| > 0 \text{ and } \sup_{k,\ell\in\ZZ} |\Lambda_D \cap V^\delta_{k,\ell}| =:N < \infty,\]
  the collection $(\wlet_{x,s})_{(x\, ;\, s)^{\intercal}\in\Lambda_D}$ is a frame for $\FLtRp$.
\end{theorem}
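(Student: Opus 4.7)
The plan is to view Theorem~\ref{thm:SmallOscYieldsFrame} as a direct translation of the general oscillation result for warped time-frequency systems \cite{bahowi15}, specialized to the wavelet case, and to verify that the hypotheses on $\psi$ are exactly those that trigger the relevant oscillation estimate in that reference. Accordingly, I would first recall that the continuous wavelet transform on $\FLtRp$ is a warped time-frequency system associated with the warping function $F(\xi) = \log(\xi)$ on $\RR^+$; the rectangles $V^\delta_{k,\ell}$ are precisely the canonical covering sets of $\Lambda$ for this warping, since $I^\delta_\ell = [e^{\delta\ell},e^{\delta(\ell+1)}) = F^{-1}([\delta\ell,\delta(\ell+1)))$, and the translation side is then normalized by $\delta^2/|I^\delta_\ell|$ so that $|V^\delta_{k,\ell}| = \delta^2$ uniformly in $k,\ell$.

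Next, I would match the regularity assumption $\mathcal F(\psi)\in \mathcal C^2(\RR)$ together with $\max\{(\bullet)^5,(\bullet)^{-5}\}\cdot\mathcal F(\psi) \in\mathcal C_0(\RR)\cap\bd L^2(\RR^+)$ against the admissibility conditions in \cite[Example~4.1]{bahowi15}. That example establishes that under exactly these smoothness and two-sided polynomial decay conditions on the Fourier transform of the prototype, the warped time-frequency system forms a continuous tight frame with unit frame bound (using $C_\psi=1$), and the generating kernel lies in a weighted Schur-type algebra $\mathcal A$ suitable for coorbit discretization. This step is mostly bookkeeping, checking that the weights and derivatives of $F$ match the hypotheses of the cited example.

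With the system shown to be localized in the sense required, I would invoke \cite[Corollary~6.9]{bahowi15} to conclude that the $\Xi$-oscillation $\oscV$ with respect to $\mathcal V^\delta$ tends to zero in $\mathcal A$-norm as $\delta\to 0$. Consequently, there exists $\delta(\Psi)>0$ such that for all $\delta\le\delta(\Psi)$ the oscillation is small enough for the abstract oscillation-method theorem to yield a frame: any choice of one sample per covering set produces a discrete frame, and by the standard perturbation/union argument, finitely many such choices (up to $N$ per set) can be combined while preserving the upper frame bound and keeping a uniform lower bound. The hypotheses $\inf_{k,\ell}|\Lambda_D\cap V^\delta_{k,\ell}|>0$ and $\sup_{k,\ell}|\Lambda_D\cap V^\delta_{k,\ell}|=N<\infty$ are exactly what is needed to play this role: the infimum condition guarantees the existence of a sampling selector hitting every $V^\delta_{k,\ell}$, and the supremum condition ensures Bessel-type control of the additional points through an $N$-fold union bound.

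The only genuine obstacle I anticipate is the translation between the abstract conditions of generalized coorbit theory and the concrete conditions stated here; in particular, \cite[Corollary~6.9]{bahowi15} is typically phrased in terms of a single sampling point per covering set, so a short separate argument is needed to pass from ``one point per $V^\delta_{k,\ell}$'' to ``between $1$ and $N$ points per $V^\delta_{k,\ell}$''. This is handled by decomposing $\Lambda_D$ into at most $N$ disjoint subsets, each of which satisfies the one-point-per-cell hypothesis, applying the oscillation-based frame result to each, and summing the resulting frame inequalities. All remaining steps are either cited verbatim from \cite{bahowi15} or consist of verifying definitions.
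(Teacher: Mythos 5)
Your proposal follows essentially the same route as the paper's own proof: identify $\Wlet$ as a warped time-frequency system with warping $\log$, verify the decay/smoothness conditions on the prototype $\theta = \mathcal F(\wlet)\circ\exp$ required by \cite[Corollary 6.9]{bahowi15} (the paper carries out the chain-rule translation you call ``bookkeeping'' explicitly), and then handle the case of between $1$ and $N$ points per cell by splitting $\Lambda_D$ into at most $N$ subsets with at most one point per cell, exactly as the paper does. The argument is correct and matches the paper's structure.
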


Since Theorem \ref{thm:SmallOscYieldsFrame} is an application of the results referred to above, its proof is not self-contained and deferred to the Appendix. In the next section, we show that certain rotated lattices satisfy the conditions on $\Lambda_D$ in Theorem \ref{thm:SmallOscYieldsFrame}.

\section{Wavelet Frames by Sampling on Lattices}\label{sec:lattices}

In the following, we set $\alpha = \varphi^{-1} = \frac{\sqrt{5}-1}{2}$, where $\varphi$ is the golden ratio, and consider the matrix
\begin{equation}
    A = \begin{pmatrix}1 & -\alpha \\ \alpha & 1 \end{pmatrix}\; .     
\end{equation}
With this choice of $A$, we define the lattice
\begin{equation}
    \Gamma = A \ZZ^2,
\end{equation}
i.e., $\Gamma = \{ A \begin{psmallmatrix}n \\m\end{psmallmatrix} \colon n,m\in\ZZ\}$. As we will see, any axis parallel rectangle $\Delta$ of a given size has clear bounds on the number of points in $\Delta \cap \Gamma$ that do not depend on its position or ratio of its side lengths. Before we proceed to prove a formal version of this statement, we derive some properties of $\alpha$ that will be subsequently used.

\begin{proposition}\label{pro:AlphaProperties}
  Let $\alpha = \varphi^{-1} = \frac{\sqrt{5}-1}{2}$. Then, for all $n\in\NN$, 
  \begin{equation}\label{eq:AlphaLinearRelation}
    \alpha^n = (-1)^{n-1}(f_n\alpha - f_{n-1}),
  \end{equation}
  where $f_n$ is the $n$-th Fibonacci number, defined by $f_0 = 0$, $f_1 = 1$ and, for $n\geq 2$, $f_n = f_{n-1}+f_{n-2}$.

  \medskip{}

  Moreover, for any $n\in\NN$, we have 
  \begin{equation}\label{eq:AlphaPowersFibonacciConstant} 
    \alpha^{n-1}f_{n+2} + \alpha^n f_{n+1} = 2+\alpha = \varphi^2.
  \end{equation}
\end{proposition}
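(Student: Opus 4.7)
The plan is to prove both identities by exploiting a single algebraic fact about $\alpha$, namely that it is a root of $x^2+x-1=0$, equivalently $\alpha^2 = 1-\alpha$, and the companion identity $1+\alpha = \varphi = \alpha^{-1}$. Everything else is routine manipulation with the Fibonacci recurrence.

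For the first identity \eqref{eq:AlphaLinearRelation}, I would proceed by induction on $n$. The base cases $n=1,2$ are immediate: $(-1)^0(f_1\alpha - f_0) = \alpha$ and $(-1)^1(f_2\alpha - f_1) = 1-\alpha$, which matches $\alpha^2 = 1-\alpha$. For the inductive step, assume the formula at index $n$ and compute
\begin{equation*}
\alpha^{n+1} = \alpha\cdot (-1)^{n-1}(f_n \alpha - f_{n-1}) = (-1)^{n-1}\bigl(f_n \alpha^2 - f_{n-1}\alpha\bigr).
\end{equation*}
Substituting $\alpha^2 = 1-\alpha$ and collecting terms gives $(-1)^{n-1}\bigl(f_n - (f_n+f_{n-1})\alpha\bigr)$, which by $f_{n+1}=f_n+f_{n-1}$ equals $(-1)^{n}(f_{n+1}\alpha - f_n)$, as claimed.

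For the second identity \eqref{eq:AlphaPowersFibonacciConstant}, the cleanest route is to show that the left-hand side, viewed as a function $g(n):=\alpha^{n-1}f_{n+2}+\alpha^n f_{n+1}$, is independent of $n$. Using $f_{n+2}=f_{n+1}+f_n$ and $1+\alpha = \varphi = \alpha^{-1}$, I would rewrite
\begin{equation*}
g(n) = \alpha^{n-1}f_{n+1}(1+\alpha) + \alpha^{n-1}f_n = \alpha^{n-2}f_{n+1} + \alpha^{n-1}f_n = g(n-1).
\end{equation*}
Iterating down to $n=1$ yields $g(n) = g(1) = f_3 + \alpha f_2 = 2+\alpha$, and finally $2+\alpha = 1+\varphi = \varphi^2$ since $\varphi^2 = \varphi+1$.

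There is no real obstacle here; both claims reduce to careful bookkeeping around the minimal polynomial of $\alpha$ and the Fibonacci recurrence. The only minor subtlety is choosing a phrasing that avoids case splits: presenting the second identity via the invariance $g(n)=g(n-1)$ sidesteps having to substitute \eqref{eq:AlphaLinearRelation} and then invoke Cassini-type identities $f_{n-1}f_{n+1}-f_n^2=(-1)^n$ and $f_{n-1}f_{n+1}-f_{n-2}f_{n+2}=2(-1)^n$, which is a viable but more cumbersome alternative.
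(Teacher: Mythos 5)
Your proposal is correct and follows essentially the same route as the paper: induction on $n$ for \eqref{eq:AlphaLinearRelation} using $\alpha^2=1-\alpha$, and for \eqref{eq:AlphaPowersFibonacciConstant} your invariance $g(n)=g(n-1)$ is just the paper's induction step read in the opposite direction, since $1+\alpha=\alpha^{-1}$ and $\alpha^2+\alpha=1$ are the same fact. No substantive difference.
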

\begin{proof}
 We first prove \eqref{eq:AlphaLinearRelation}. For $n=1$, the statement trivially holds, while for $n=2$,
  \begin{equation}\label{eq:alphasq}
    \alpha^2 = \varphi^{-2} = \frac{1}{1+\varphi} = 1 - \frac{\varphi}{1+\varphi} = 1 - \alpha 
  \end{equation}
  as desired. 
  The induction step follows from
  \[
    \begin{split}
    \alpha^{n+1} 
    = \alpha^n\cdot \alpha 
    & = (-1)^{n-1}(f_n\alpha - f_{n-1})\cdot \alpha\\
    & \stackrel{\hidewidth \eqref{eq:alphasq} \hidewidth}= (-1)^{n-1} (f_n (1-\alpha) - f_{n-1}\alpha)\\
    & = (-1)^{n-1} (f_n - (f_n + f_{n-1})\alpha) \\
    & = (-1)^n (f_{n+1}\alpha - f_n).
    \end{split}
  \]

  \medskip{}

  We next prove  \eqref{eq:AlphaPowersFibonacciConstant}. Inserting $n=1$ on the left hand side immediately yields
  \[
    \alpha^0 f_3 + \alpha^1 f_2 = 2 + \alpha, 
  \]
  as desired. 
  The induction step follows from
  \begin{align*}
    \alpha^{n}f_{n+3} + \alpha^{n+1} f_{n+2}
    & = \alpha^{n}(f_{n+2}+f_{n+1}) + \alpha^{n+1} f_{n+2} \\
    & = (\alpha^2+\alpha) \alpha^{n-1}f_{n+2} + \alpha^{n}f_{n+1} \\
    &  \stackrel{\hidewidth \eqref{eq:alphasq} \hidewidth}= \alpha^{n-1}f_{n+2} + \alpha^{n}f_{n+1} \\
    & = 2 + \alpha.\hfill\qedhere
  \end{align*}  
  
\end{proof}

We are now ready to show that any axis-parallel rectangle with sufficient area contains at least one lattice point. 

\begin{lemma}\label{lem:RectContainsLatticeElement}
    Let $\Delta = [a,b) \times [c,d)$ be an axis parallel half-open rectangle of size $\mu(\Delta)\geq 2+\alpha = \varphi^2$, where $\mu$ is the standard Lebesgue measure on $\Lambda = \RR\times \RR^+$. 
    Then $\lvert \Delta \cap \Gamma \rvert \geq 1$.
\end{lemma}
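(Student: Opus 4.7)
The plan is to reduce the two-dimensional problem to a one-dimensional Diophantine question by changing basis adapted to the scale of $\Delta$.

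First, I would exploit a symmetry: since $A$ commutes with the $90^\circ$ rotation $J := \begin{psmallmatrix} 0 & -1 \\ 1 & 0 \end{psmallmatrix}$, the lattice $\Gamma$ is invariant under $J$. Applying $J$ if necessary, I may assume $w := b - a \geq h := d - c$; then $w \geq \sqrt{wh} \geq \varphi$. I would then pick the unique integer $N \geq 2$ with $\varphi^{N-1} \leq w < \varphi^N$, whence the area bound forces $h > \varphi^{2-N}$.

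Next, using Proposition~\ref{pro:AlphaProperties} (identity~\eqref{eq:AlphaLinearRelation} supplies the $y$-components, while a shifted form of~\eqref{eq:AlphaPowersFibonacciConstant} supplies the $x$-components), I would identify two lattice vectors
\[
u_N := A \begin{pmatrix} f_N \\ -f_{N-1} \end{pmatrix} = \begin{pmatrix} \varphi^{N-1} \\ (-1)^{N-1}\alpha^N \end{pmatrix},\quad u_{N+1} := A\begin{pmatrix} f_{N+1} \\ -f_N \end{pmatrix} = \begin{pmatrix} \varphi^N \\ (-1)^N\alpha^{N+1} \end{pmatrix}.
\]
A short determinant computation (using $\varphi\alpha = 1$) yields $|\det[u_N \mid u_{N+1}]| = 2 - \alpha = |\det A|$, so $\{u_N, u_{N+1}\}$ is also a $\ZZ$-basis of $\Gamma$. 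Writing a generic lattice point as $p u_N + q u_{N+1}$, containment in $\Delta$ rewrites (modulo a parity-dependent orientation of the $y$-interval) as
\[
p + q\varphi \in I_1 := [a \varphi^{1-N}, b \varphi^{1-N}), \qquad p - q\alpha \in I_2,
\]
with $|I_1| = w\varphi^{1-N} \in [1, \varphi)$ and $|I_2| = h\varphi^N > \varphi^2$.

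Since $|I_1| \geq 1$, for every $q \in \ZZ$ there exists a unique $p_q \in \ZZ$ with $p_q + q\varphi \in [a\varphi^{1-N}, a\varphi^{1-N} + 1) \subset I_1$. It remains to pick $q$ so that $p_q - q\alpha \in I_2$. Using $\varphi + \alpha = \sqrt{5}$ and tracking how the fractional part $\{q\varphi - a\varphi^{1-N}\}$ evolves under $q \mapsto q+1$, I would show that $q \mapsto p_q - q\alpha$ is strictly monotone with consecutive differences equal to either $\varphi$ or $\varphi^2$. Hence its image has maximum gap $\varphi^2$, and any half-open interval of length at least $\varphi^2$ must intersect it. Since $|I_2| > \varphi^2$, such a $q$ exists, and the corresponding $p_q u_N + q u_{N+1}$ is the sought lattice point in $\Delta$.

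The main obstacle I foresee is pinning down the precise step sizes $\varphi$ and $\varphi^2$ of the sequence $q \mapsto p_q - q\alpha$: this relies on the particular Diophantine nature of $\alpha = \varphi^{-1}$ (its continued fraction expansion $[0; 1, 1, \ldots]$), ensuring that the maximum gap matches the area threshold $2 + \alpha = \varphi^2$ exactly. Any irrational with less favourable approximation properties would yield larger gaps and consequently a worse area bound, highlighting why the golden-ratio choice is essential here.
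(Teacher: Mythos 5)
Your proof is correct, but it takes a genuinely different route from the paper's. The paper argues by contraposition: it normalizes an empty rectangle so that a lattice point sits at the corner $\begin{psmallmatrix}a_0\\ d\end{psmallmatrix}=\begin{psmallmatrix}0\\0\end{psmallmatrix}$, splits into cases according to whether both sides have length at least $1$, chooses $n$ from the \emph{height} via $\alpha^n\leq -c<\alpha^{n-1}$, and exhibits just two lattice points $Ap_0, Ap_1$ flanking the rectangle to bound $b-a< f_{n+2}+\alpha f_{n+1}$, which multiplied by $d-c<\alpha^{n-1}$ gives $\mu(\Delta)<2+\alpha$ via \eqref{eq:AlphaPowersFibonacciConstant}. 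You instead give a direct existence proof: you choose $N$ from the \emph{width}, pass to the renormalized basis $\{u_N,u_{N+1}\}$ of $\Gamma$ (the same Fibonacci-indexed vectors $A\begin{psmallmatrix}f_n\\-f_{n-1}\end{psmallmatrix}$ that the paper uses, and your determinant and component computations check out), and reduce to a one-dimensional gap argument for $q\mapsto p_q-q\alpha$. That last step is sound and easier than you fear: since $p_q$ is the unique integer in $[t-q\varphi,\,t-q\varphi+1)$ and $1<\varphi<2$, one gets $p_{q+1}-p_q\in\{-1,-2\}$ directly, so the consecutive decrements are exactly $1+\alpha=\varphi$ or $2+\alpha=\varphi^2$ with no further input from the continued fraction expansion; and the half-open-orientation issue introduced by the rotation $J$ is absorbed by the strict inequality $|I_2|>\varphi^2$. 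What each approach buys: the paper's is shorter and stays entirely at the level of comparing a few explicit lattice points; yours makes the self-similarity of $\Gamma$ explicit, explains why the threshold $\varphi^2$ is exactly the maximal gap (so it is sharp for this argument), and—being essentially a three-distance-theorem argument—looks better suited to the generalization to arbitrary badly approximable $\alpha$ that the authors announce in the conclusion.
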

\begin{proof}
  Given $\Delta = [a,b) \times [c,d)$, with $\Delta\cap \Gamma = \emptyset$, we assume, without loss of generality, that there exist $a_0 \in [a,b)$ and $c_0 \in [c,d)$, such that 
  \[
  \binom{b}{c_0},\, \binom{a_0}{d}\in \Gamma.
  \]
  Otherwise, there is a nonzero $\begin{psmallmatrix} b_0 \\d_0\end{psmallmatrix}\in [0,\infty)^2$, such that $(\Delta\ +\ [0,b_0)\times [0,d_0))\,\cap\, \Gamma = \emptyset$.

  We now show that $\Delta\cap \Gamma = \emptyset$ implies that $\mu(\Delta)<2+\alpha$. To this end, we first consider the case that both $b-a\geq 1$ and $d-c\geq 1$. 
  Because $\binom{a_0+\alpha}{d-1}\in \Gamma$ and $d-1\in [c,d)$, we have $a_0+\alpha\geq b$, since otherwise $\binom{a_0+\alpha}{d-1}\in \Gamma\cap \Delta$. 
  Similarly, because $\binom{a_0-1}{d-\alpha}\in \Gamma$ and $d-\alpha\in [c,d)$,
  we have $a_0 - 1 < a$.
  Hence, 
  \begin{equation*}
      b \leq a_0+\alpha < a +1 + \alpha
  \end{equation*}
  and we conclude that $b-a < 1+\alpha$. 
  Analogously, 
  we obtain $d-c< 1+\alpha$.
  Together, this implies $\mu(\Delta) < (1+\alpha)^2$. Finally, \eqref{eq:alphasq} yields
  \[
   (1+\alpha)^2 = 1 + 2\alpha + \alpha^2 = 2+\alpha,
  \]
  as desired. 

  \medskip{}
  
  For the remainder of the proof, we assume that $d-c<1$ and remark that the proof for the case $b-a<1$ is analogous, using the rotational symmetry $\Gamma = \begin{psmallmatrix} 0 & -1 \\ 1 & 0 \end{psmallmatrix}\Gamma$.
  Furthermore, since $\Gamma\subset \RR^2$ is a lattice, it is in particular a subgroup of $\RR^2$: The problem is invariant under shifts $\Delta-\gamma$ of $\Delta$ by lattice points $\gamma\in \Gamma$. Hence, we can assume, without loss of generality, that $\begin{psmallmatrix} a_0\\ d\end{psmallmatrix} = \begin{psmallmatrix} 0\\ 0\end{psmallmatrix}$, implying $a\leq 0$, $b>0$ and $c<0$. Further, let $n\in\NN$ be such that 
  \[
    \alpha^n \leq -c < \alpha^{n-1}.
  \]
  In particular, this implies 
  \begin{equation}\label{eq:cleq0}
      c \leq -\alpha^n < -\alpha^{n+1} < 0.
  \end{equation}
  If $n\in 2\NN$ then, by \eqref{eq:AlphaLinearRelation} in Proposition \ref{pro:AlphaProperties}, the chain of inequalities in \eqref{eq:cleq0}  is equivalent to
  \begin{equation}\label{eq:cAndAlphanBounds}
    c\leq   f_n\alpha -f_{n-1} < f_n - f_{n+1}\alpha  <0.
  \end{equation}
  For $p_0 = \begin{psmallmatrix} f_{n}\\ -f_{n-1}\end{psmallmatrix}$ and $p_1 = \begin{psmallmatrix} -f_{n+1}\\ f_n\end{psmallmatrix}$, we have
  \[
    Ap_0 = \begin{pmatrix} f_n + \alpha f_{n-1} \\ f_n\alpha - f_{n-1}\end{pmatrix}\quad\text{and}\quad Ap_1 = \begin{pmatrix} -f_{n+1} - \alpha f_{n} \\ -f_{n+1}\alpha + f_{n}\end{pmatrix}.
  \]
  In particular, \eqref{eq:cAndAlphanBounds} yields $Ap_0\in ( (0,\infty)\times [c,0) )\,\cap\, \Gamma$ and $Ap_1\in ((-\infty,0)\times [c,0))\, \cap\, \Gamma$. 
  Thus, $\Delta\cap\Gamma = \emptyset$ implies $b \leq  (Ap_0)_1$ and $a > (Ap_1)_1$.
  Together, we obtain 
  \begin{align*}
    b-a 
    & <
    (Ap_0)_1 - (Ap_1)_1 \\
    & = f_n + \alpha f_{n-1} - (-f_{n+1} - \alpha f_n) 
    \\
    & = f_{n+2} + \alpha f_{n+1}.
  \end{align*}
  Similarly, if $n\in 2\NN-1$, we set $p_0 = \begin{psmallmatrix} -f_{n}\\f_{n-1}\end{psmallmatrix}$ and $p_1 = \begin{psmallmatrix} f_{n+1}\\{-f_{n}}\end{psmallmatrix}$ and we arrive at the same bound for $b-a$. 

  Since $d-c = -c < \alpha^{n-1}$, we obtain 
  \[
    \mu(\Delta) <  \alpha^{n-1}f_{n+2} + \alpha^n f_{n+1} = 2+\alpha,
  \]
  by \eqref{eq:AlphaPowersFibonacciConstant} in Proposition \ref{pro:AlphaProperties}. This completes the proof.
\end{proof}

Furthermore, any axis-parallel rectangle that has sufficiently small area contains no more than a single element of $\Gamma$.

\begin{lemma}\label{lem:SmallRectContainsAtMostOneLatticeElement}
     Let $\Delta = [a,b) \times [c,d)$ be an axis parallel half-open rectangle of size $\mu(\Delta)\leq 1/(3+2\alpha)$. 
    Then $\lvert \Delta \cap \Gamma \rvert \leq 1$.
\end{lemma}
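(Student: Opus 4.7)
The plan is to argue by contradiction. Suppose $\gamma_1,\gamma_2\in\Delta\cap\Gamma$ are distinct. Since $\Gamma$ is a subgroup of $\RR^2$, the difference $r := \gamma_2-\gamma_1$ is a nonzero lattice element, and because both points lie in the half-open rectangle $[a,b)\times[c,d)$, its coordinates satisfy $|r_1|<b-a$ and $|r_2|<d-c$. Multiplying these strict inequalities yields the basic bound $|r_1 r_2| < (b-a)(d-c) = \mu(\Delta)$. The strategy is then to derive a matching \emph{lower} bound on $|r_1 r_2|$ that depends only on the arithmetic of $\alpha$, and show it exceeds $1/(3+2\alpha)$.

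To this end, I would write $r = A\begin{psmallmatrix}n_1\\n_2\end{psmallmatrix}$ with $(n_1,n_2)\in\ZZ^2\setminus\{0\}$, so that $r_1 = n_1-\alpha n_2$ and $r_2 = \alpha n_1 + n_2$. Expanding the product and applying the identity $\alpha^2 = 1-\alpha$ from \eqref{eq:alphasq} gives, after a direct calculation,
\begin{equation*}
   r_1 r_2 = \alpha n_1^2 + (1-\alpha^2)n_1 n_2 - \alpha n_2^2 = \alpha\bigl(n_1^2 + n_1 n_2 - n_2^2\bigr).
\end{equation*}
This is the heart of the argument: the quadratic form $n_1^2+n_1 n_2 - n_2^2$ is precisely the norm form associated with the golden ratio, evaluated at $(n_1,n_2)$, scaled by $\alpha$.

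Next, I would observe that $n_1^2 + n_1 n_2 - n_2^2$ is a nonzero integer. Indeed, the polynomial $t^2+t-1$ has roots $\alpha$ and $-\varphi$, both irrational; so if the form vanished with $n_2\neq 0$, then $n_1/n_2\in\{\alpha,-\varphi\}$, a contradiction, and if $n_2=0$ the form reduces to $n_1^2$, which is nonzero unless also $n_1=0$. Therefore $|n_1^2+n_1 n_2 - n_2^2|\geq 1$, giving $|r_1 r_2|\geq \alpha$.

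To close the argument I need $\alpha > 1/(3+2\alpha)$, equivalently $\alpha(3+2\alpha)>1$. Using \eqref{eq:alphasq} once more, $\alpha(3+2\alpha) = 3\alpha + 2\alpha^2 = 3\alpha + 2(1-\alpha) = 2+\alpha > 1$, so the bound holds. Combining with $\mu(\Delta)\leq 1/(3+2\alpha)$ yields $|r_1 r_2|\geq \alpha > 1/(3+2\alpha) \geq \mu(\Delta)$, contradicting $|r_1 r_2|<\mu(\Delta)$ and completing the proof. I do not anticipate a serious obstacle: the only nontrivial step is spotting the factorization of $r_1 r_2$ through the golden-ratio norm form, after which everything reduces to the relation $\alpha^2=1-\alpha$ already established in Proposition~\ref{pro:AlphaProperties}.
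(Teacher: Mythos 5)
Your proof is correct, and it takes a genuinely different route from the paper. The paper first normalizes (shifting a lattice point to the origin, treating only the case $d-c<1$ by symmetry), then invokes the badly-approximable bound $\lvert \alpha + m/n\rvert \geq \frac{1}{(3+2\alpha)n^2}$ from Hardy--Wright to control the two side lengths \emph{separately}: the vertical constraint forces $\lvert n\rvert > \frac{1}{(3+2\alpha)d}$, a sign argument gives $\lvert n - m\alpha\rvert \geq \lvert n\rvert$, and multiplying the two one-sided estimates yields $(b-a)d > \frac{1}{3+2\alpha}$. You instead work directly with the difference $r=\gamma_2-\gamma_1$ of two lattice points and factor the product of its coordinates through the norm form of $\ZZ[\varphi]$, obtaining $r_1r_2=\alpha(n_1^2+n_1n_2-n_2^2)$ with the integer factor nonzero because $t^2+t-1$ has no rational roots. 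This is shorter (no case split on which side is short, no WLOG normalization, no external citation) and it proves a strictly stronger statement: any axis-parallel half-open rectangle of area at most $\alpha$ contains at most one lattice point, since $\alpha = 1/\varphi > 1/(3+2\alpha) = 1/(2+\sqrt{5})$. That improvement would propagate to Corollary \ref{cor:ProporitonalityAndSampling}: since $(2+\alpha)/\alpha = 2+\sqrt{5} < 5$, the upper bound $12$ there could be replaced by $5$ via the same covering argument. The two proofs are ultimately cousins --- the Hardy--Wright estimate for a quadratic irrational is itself a consequence of the nonvanishing of the associated norm form --- but your version applies the norm form in exactly the symmetric two-sided way this lattice calls for, rather than through the generic one-sided approximation inequality, which is where the sharper constant comes from.
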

\begin{proof}
  Given $\Delta = [a,b) \times [c,d)$, with $\Delta\cap \Gamma \neq \emptyset$, we assume, without loss of generality, that there exist $a_0 \in [a,b)$ and $c_0 \in [c,d)$, such that 
  \[
  \binom{a}{c_0},\, \binom{a_0}{c}\in \Gamma.
  \]
  Otherwise, there is a nonzero $\begin{psmallmatrix} b_0 \\d_0\end{psmallmatrix}\in [0,\infty)^2$, such that $[a+b_0,b)\times [c+d_0,d)\cap \Gamma \neq \emptyset$.
  We now show that $\lvert \Delta \cap \Gamma \rvert \geq 2$ implies $\mu(\Delta)> 1/(3+2\alpha)$.

  Analogous to the previous proof, we only consider the case $d-c<1$ and
  assume 
  that $\begin{psmallmatrix} a_0\\ c\end{psmallmatrix} = \begin{psmallmatrix} 0\\ 0\end{psmallmatrix}$, implying $a\leq 0$, $b>0$ and $d>0$.
  Since $\alpha$ is a so-called badly approximable number, we can bound how well it can be approximated by any rational. 
  Specifically, by \cite[Sec.~11.7]{hardy1979introduction}, we have 
    \begin{equation} \label{eq:fibonbound}
        \bigg\lvert \alpha + \frac{m}{n} \bigg\rvert \geq \frac{1}{(3+2\alpha)n^2}
    \end{equation}
    for any $n,m\in \ZZ$, $n\neq 0$.
    In turn, we have 
    $\lvert n \alpha + m \rvert \geq \frac{1}{(3+2\alpha)\abs{n}}$.
    Hence, any lattice point in $\Gamma$ with second component in $[0,d)$ must satisfy $\frac{1}{(3+2\alpha)\abs{n}} < d$, i.e., 
    \begin{equation}\label{eq:absnbound1}
        \abs{n} > \frac{1}{(3+2\alpha)d}.
    \end{equation}
    
    On the other hand, $\lvert n \alpha + m \rvert < d < 1$, implies that the cases $n>0$ and $m>0$ as well as $n<0$ and $m<0$ can be excluded, which implies $nm\leq 0$.
    Thus, for the first component of any point in $\Delta \cap \Gamma$, we have 
    $\lvert n - m \alpha \rvert = \lvert n \rvert+ \lvert m \alpha \rvert$.
    Furthermore, $a \leq n -  m \alpha < b$ and, hence,
    \begin{equation}\label{eq:absnbound2}
         \lvert n \rvert \leq \lvert n \rvert+ \lvert m \alpha \rvert < b-a.
    \end{equation}
    Combining \eqref{eq:absnbound1} and \eqref{eq:absnbound2}, we obtain 
    $\frac{1}{(3+2\alpha)} < (b-a) d = \mu(\Delta)$.\hfill\qedhere


\end{proof}

\begin{corollary}\label{cor:ProporitonalityAndSampling}
  Let $\Delta = [a,b) \times [c,d)$ be an axis parallel half-open rectangle of size $\mu(\Delta)= 2+\alpha = \varphi^2$. Then $1\leq\lvert \Delta \cap \Gamma \rvert \leq 12$.
  \medskip{}

  In particular, for any $\delta>0$, there is a $\beta := \beta(\delta)>0$, such that the following holds: If $\mathcal V^\delta = (V^\delta_{k,\ell})_{k,\ell\in\ZZ}$ is as in Theorem \ref{thm:SmallOscYieldsFrame}, then 
  \[
    1\leq \lvert \beta\Gamma\cap V^\delta_{k,\ell}\rvert \leq 12, 
  \]
  for all $k,\ell\in\ZZ$. Hence, if $\delta \leq \delta(\Wlet)$, we can choose $\Lambda_D = \beta\Gamma$ in Theorem \ref{thm:SmallOscYieldsFrame}.
\end{corollary}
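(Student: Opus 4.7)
The plan is to derive both assertions of the corollary directly from Lemmas~\ref{lem:RectContainsLatticeElement} and~\ref{lem:SmallRectContainsAtMostOneLatticeElement}, together with a linear rescaling. For the first assertion, the lower bound $|\Delta \cap \Gamma| \geq 1$ is immediate from Lemma~\ref{lem:RectContainsLatticeElement}, since $\mu(\Delta) = 2+\alpha$. For the upper bound, I would partition $\Delta$ into $3 \times 4 = 12$ axis-parallel half-open sub-rectangles of equal size by splitting each side uniformly, so that every piece has area $(2+\alpha)/12$. Applying Lemma~\ref{lem:SmallRectContainsAtMostOneLatticeElement} to each sub-rectangle then yields at most one lattice point per piece, and hence $|\Delta \cap \Gamma| \leq 12$.

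For this argument to go through, the elementary inequality $(2+\alpha)/12 \leq 1/(3+2\alpha)$ is needed, which rearranges to $(2+\alpha)(3+2\alpha) \leq 12$. Expanding and substituting $\alpha^2 = 1-\alpha$ from \eqref{eq:alphasq}, this reduces to $8+5\alpha \leq 12$, which holds since $\alpha = (\sqrt{5}-1)/2 < 4/5$. Note that $12$ is in fact the smallest integer for which this uniform subdivision works, as $8+5\alpha > 11$.

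For the second assertion, I would rescale so that each cell $V^\delta_{k,\ell}$ pulls back to a rectangle of precisely the area handled by the first part. Setting $\beta := \delta/\varphi$ and using that $|\beta\Gamma \cap V^\delta_{k,\ell}| = |\Gamma \cap \beta^{-1}V^\delta_{k,\ell}|$, the preimage $\beta^{-1}V^\delta_{k,\ell}$ is an axis-parallel half-open rectangle of Lebesgue measure $\delta^2/\beta^2 = \varphi^2 = 2+\alpha$. Applying the first assertion uniformly in $k,\ell \in \ZZ$ then gives $1 \leq |\beta\Gamma \cap V^\delta_{k,\ell}| \leq 12$. For $\delta \leq \delta(\Wlet)$, these bounds are precisely the hypotheses on $\Lambda_D$ in Theorem~\ref{thm:SmallOscYieldsFrame}, so taking $\Lambda_D := \beta\Gamma \cap \Lambda$ (which has the same intersection counts with each $V^\delta_{k,\ell}$, as $V^\delta_{k,\ell} \subset \Lambda$) produces a wavelet frame.

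The main obstacle, modest as it is, lies in obtaining a concrete uniform upper bound on the lattice point count for rectangles of area $\varphi^2$; the key observation is that no rectangle of that area can be too thin or too long to prevent a $3\times 4$ equal-area subdivision from meeting the hypothesis of Lemma~\ref{lem:SmallRectContainsAtMostOneLatticeElement}. All remaining geometric content is already absorbed into Lemmas~\ref{lem:RectContainsLatticeElement} and~\ref{lem:SmallRectContainsAtMostOneLatticeElement}, and the rescaling reduces to a one-line computation.
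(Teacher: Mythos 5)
Your proof is correct and takes essentially the same route as the paper: the lower bound comes from Lemma~\ref{lem:RectContainsLatticeElement}, the upper bound from splitting $\Delta$ into $12$ half-open pieces of area $(2+\alpha)/12 < (3+2\alpha)^{-1}$ (the paper phrases this as a covering, via the same inequality $(2+\alpha)(3+2\alpha) = 8+5\alpha < 12$) and applying Lemma~\ref{lem:SmallRectContainsAtMostOneLatticeElement}, and the second assertion from rescaling each $V^\delta_{k,\ell}$ to a rectangle of area $\varphi^2$. Your choice $\beta = \delta/\varphi = \delta(2+\alpha)^{-1/2}$ is the dimensionally correct scaling factor; the paper's stated $\beta(\delta) = \delta^2(2+\alpha)^{-1}$ is the square of this and appears to be a typo.
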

\begin{proof}
  The lower bound in the first assertion follows from Lemma \ref{lem:RectContainsLatticeElement}. For the upper bound note that $2+\alpha < \tfrac{12}{2\alpha + 3}$, such that an axis-parallel, half-open rectangle of area $2+\alpha$ can be contained in no more than $12$ axis-parallel, half-open rectangles of area $(2\alpha + 3)^{-1}$. The bound now follows from Lemma \ref{lem:SmallRectContainsAtMostOneLatticeElement}.

  For the second assertion, it is sufficient to note that each $V^\delta_{k,l}$ is an axis-parallel, half-open rectangle of area $\mu(V^\delta_{k,l})=\delta^2$. Choose $\beta(\delta) = \delta^2\cdot (2+\alpha)^{-1}$ to obtain the desired result. 
\end{proof}

Considering the coarse estimates used in both previous proofs, it seems quite likely that the upper bound in Corollary \ref{cor:ProporitonalityAndSampling} can be further improved. 

\section{Conclusion and Outlook}

We have shown that a certain scale of rotated lattices provides sets of stable sampling for continuous wavelet systems, when restricted to the upper half plane. Our proof relies on prior work on discretization of wavelet systems in the context of coorbit spaces, 
and a property of the proposed lattices that evokes discrepancy theory. It should be noted that our result 
generalizes to other warped time frequency systems or, more generally, localized continuous frames that satisfy oscillation estimates with respect to a phase-space covering comprised of axis-parallel rectangles with identical area. A generalization to higher dimensional phase space seems quite feasible. 

We expect analogues of Lemmas \ref{lem:RectContainsLatticeElement} and \ref{lem:SmallRectContainsAtMostOneLatticeElement} for any badly approximable number in place of $\alpha$. Specifically, some bound in the style of Lemma \ref{lem:SmallRectContainsAtMostOneLatticeElement} exists for any badly approximable number, but \cite[Sec.~11.7]{hardy1979introduction} only yields explicit estimates for algebraic numbers. We are currently working on a proof for Lemma \ref{lem:RectContainsLatticeElement} that does not rely on the properties of the golden ratio $\varphi$ and generalizes to arbitrary badly approximable numbers. With this extended result, it will be possible to show that a more general class of time-frequency lattices generates sets of stable sampling for wavelet systems and other localized continuous frames.

\section*{Acknowledgments} 
We would like to 
thank Friedrich Pillichshammer for continued, inspiring discussions on the use and construction of low discrepancy sequences. 
G. K. gratefully acknowledges support by the Austrian Science Fund (FWF) project Y 1199.

\appendix
\begin{proof}[Proof of Theorem \ref{thm:SmallOscYieldsFrame}]
  First note that \cite[Example 4.1]{bahowi15} shows that $\Wlet$ is a warped time-frequency system with respect to the warping function $\Phi = \log$ (the natural logarithm) and the prototype $\theta = \mathcal F (\wlet) \circ \exp$, where $\circ$ denotes composition. In particular, it is a warping function in the sense of \cite[Definition 4.2]{bahowi15}, with associated weight function $w = (\Phi^{-1})' = \exp$. Clearly, $w\in \mathcal C^{\infty}(\RR)$ satisfies $w(t+s)=w(t)w(s)$, for all $t,s\in\RR$, such that it is self-moderate. Finally, the $k$-th derivative $w^{(k)}$ of $w$ equals $w$, such that $|w^{(k)}/w|=1$, and $\Phi=\log$ satisfies all assumptions on $\Phi$ in \cite[Corollary 6.9]{bahowi15}. The covering $\mathcal V^\delta$ in the statement of Theorem \ref{thm:SmallOscYieldsFrame} is precisely the $\Phi$-induced $\delta$-cover, for $\Phi=\log$, considered in \cite[Corollary 6.9]{bahowi15}. Further, Equation (16) in \cite{bahowi15} is satisfied with $Y=\bd L^2(\Lambda)$ and $m\equiv 1$, as stated in \cite[Section 3]{fora05}. It only remains to verify the conditions on $\theta = \mathcal F (\wlet) \circ \exp$, before \cite[Corollary 6.9]{bahowi15} can be applied. 
  
  With this choice of $m$, and estimating $w(t)=\exp(t)\leq \exp(|t|)$, the conditions on $\theta$ in \cite[Corollary 6.9]{bahowi15} simplify to 
  \begin{enumerate}
    \item $\theta\in\mathcal C^2(\RR)$ with $\theta^{(k)}\cdot \exp(|\bullet|)^3\in \mathcal C_0(\RR)$, for $0\leq k\leq 2$, 
    \item $\theta^{(k)} \cdot \exp(|\bullet|)^{\frac{7-2k}{2}}\in \bd L^2(\RR)$.
  \end{enumerate} 
  Since $\exp(|\log(\tau)|) = \max\{\tau,\tau^{-1}\}$, for all $\tau\in\RR^+$, the first condition is equivalent to 
  \[
    \theta^{(k)}(\log(\bullet))\cdot \max\{\bullet^3,\bullet^{-3}\}\in \mathcal C_0(\RR^+).
  \]
  Furthermore, 
  \begin{equation}\label{eq:ThetaLogFirstDerivative}
    \theta^{(1)}\circ \log(\tau) = \tau\cdot (\theta\circ \log)^{(1)}(\tau) = \tau\cdot (\mathcal F(\wlet))^{(1)}(\tau)
  \end{equation}
  \begin{equation}\label{eq:ThetaLogSecondDerivative}
    \text{and }\ \theta^{(2)}\circ \log(\tau) = \tau\cdot (\mathcal F(\wlet))^{(2)}(\tau) - (\mathcal F(\wlet))^{(1)}(\tau), 
  \end{equation}
  for all $\tau\in\RR^+$. We conclude that Item (1) is equivalent to $\max\{\bullet^4,\bullet^{-4}\}\cdot(\mathcal F(\wlet))^{(k)}\in \mathcal C_0(\RR)$, for all $0\leq k\leq 2$.

  Further, for all $l\in\NN$ and measurable $g\colon \RR \rightarrow \CC$,
  \[
    \begin{split}
  \lefteqn{\int_\RR |g(t)\cdot \exp(|t|)^l|^2 ~dt}\\
   & = \int_{\RR^+} \tau^{-1}|\max\{\tau^l,\tau^{-l}\}\cdot (g\circ \log) (\tau)|^2 ~d\tau.
    \end{split}
  \]
  Inserting $g = \theta^{(k)}$ and $l = \tfrac{7-2k}{2}$, for $0\leq k\leq 2$, and using Equations \eqref{eq:ThetaLogFirstDerivative} and \eqref{eq:ThetaLogSecondDerivative} once more, we see that Item (2) is implied by $\max\{(\bullet)^5,(\bullet)^{-5}\}\cdot \mathcal F(\psi)  \in \bd L^2(\RR^+)$. Altogether, $\theta = \mathcal F(\wlet)$ satisfies the conditions of \cite[Corollary 6.9]{bahowi15}, as desired, such that we can invoke said corollary. 
  
  Note that \cite[Corollary 6.9]{bahowi15} requires a pairwise association of the points $\lambda_{k,l}\in \Gamma$ and the elements $V^\delta_{k,l}$ of $\mathcal V^\delta$, for all $k,l\in\ZZ$, but the choice of point in $V^\delta_{k,l}$ is arbitrary. The covering $\mathcal V^\delta$ is, in fact, a tiling of $\Lambda$, such that $\lambda\in V^\delta_{k,l}$ implies $\lambda\notin V^\delta_{k',l'}$, for any $k,k',l,l'\in \ZZ$ with $\begin{psmallmatrix} k \\ l\end{psmallmatrix}\neq \begin{psmallmatrix} k' \\ l'\end{psmallmatrix}$. Hence, we can find $\Gamma_0 \subset\Gamma$, such that $|\Gamma_0 \cap V^\delta_{k,l}|=1$, for all $k,l\in\ZZ$, and $\Gamma_0$ is a set of stable sampling for $\Wlet$ by \cite[Corollary 6.9]{bahowi15}. 
  
  We can further find $\Gamma_1,\ldots,\Gamma_{N-1}\subset \Gamma$ that satisfy the following:
  \begin{enumerate}
    \item[1')] $\bigcup_{n=0}^{N-1} \Gamma_n = \Gamma$, and $\Gamma_n\cap \Gamma_{n'} =\emptyset$, for all $n,n'\in \{0,\ldots,N-1\}$ with $n\neq n'$. 
    \item[2')] $|\Gamma_n \cap V^\delta_{k,l}|\leq 1$, for all $n\in\{0,\ldots,N-1\}$ and all $k,l\in\ZZ$. 
  \end{enumerate}
  Since the points $(\lambda_{k,l})_{k,l\in\ZZ}$ with $\lambda_{k,l}\in V^\delta_{k,l}$, for all $k,l\in\ZZ$ in \cite[Corollary 6.9]{bahowi15} are arbitrary, it is clear that each $\Gamma_n$ satisfies the implicit 
  upper frame bound in that result. Hence, we conclude that $\Gamma$ is a set of stable sampling for $\Wlet$. Precisely, the lower bound in \eqref{eq:frameineq} equals the lower bound implied in \cite[Corollary 6.9]{bahowi15}, whereas the upper bound in \eqref{eq:frameineq} is no larger than $N$ times the upper bound implied in \cite[Corollary 6.9]{bahowi15}.
\end{proof}

\bibliography{addbib.bib}
\bibliographystyle{IEEEtran}

\end{document}